\crefname{hypothesis}{Hypothesis}{Hypotheses}
\title{Optimally Controlling Nutrition and Propulsion Force in a Long Distance Running Race\thanks{Submitted to the editors 06/29/22.
\funding{Support by the US National
Science Foundation under grant 2031213 and by the US Office of
Naval Research under grant N00014-22-1-2397 are gratefully acknowledged.}}}
\author{Cameron Cook\thanks{RTI Health Solutions, Research Triangle Park, NC 
  (\email{ccook@rti.org}, \url{https://www.rtihs.org/research-team/cameron-cook}).}
\and Suzanne Lenhart \thanks{Department of Mathematics, University of Tennessee, Knoxville, TN 
  (\email{slenhart@utk.edu}).}
\and William Hager \thanks{Department of Mathematics, University of Florida, Gainesville, FL
(\email{hager@ufl.edu}).} 
\and Guoxun Chen \thanks{Department of Nutrition, University of Tennessee, Knoxville, TN
(\email{gchen@utk.edu}).}} 
\begin{document}

%
%
%
%
%
%
%
%
%
%
%
%

\maketitle
\begin{abstract}
Runners competing in races are looking to optimize their performance. In this paper, a runner's performance in a race, such as a marathon, is formulated as an optimal control problem where the controls are: the nutrition intake throughout the race and the propulsion force of the runner.  As nutrition is an integral part of successfully running long distance races, it needs to be included in models of running strategies.  We formulate a system of ordinary differential equations to represent the velocity, fat energy, glycogen energy, and nutrition for a runner competing in a long-distance race. The energy compartments represent the energy sources available in the runner’s body.  We allocate the energy source from which the runner draws, based on how fast the runner is moving.  The food consumed during the race is a source term for the nutrition differential equation. With our model, we are investigating strategies to manage the nutrition and propulsion force in order to minimize the running time in a fixed distance race.  This requires the solution of a nontrivial singular control problem.  Our results confirm the belief that the most effective way to run a race is to run approximately the same pace the entire race without letting one's energies hit zero.
\end{abstract}

\begin{keywords}
Optimization, Differential Equation Models, Running, Nutrition, Bioenergetics
\end{keywords}

\begin{MSCcodes}
34B60, 49N90, 92B05
\end{MSCcodes}

\section{Introduction}\label{Introduction}
Running is one of the most popular forms of exercise. There are more than $275,000$ road races per year in the United States \cite{RunUSA}.  At the marathon distance alone, there are over $500,000$ people a year in the United States who choose to race (www.runningtheusa.com).  Some people run for fun while others choose to seriously compete, attempting to run the shortest time for the fixed distance race. Outside of sprints, pacing oneself to run the best possible race is crucial. If the runner starts the race too slow, they may not finish in as fast a time as expected. On the other-hand, if the runner goes out too fast, they may find themselves running out of energy, struggling to even finish the race. Knowing what pace strategy is best for each individual runner is the hard part, as it depends on several values that are individual to each runner, something this work aims to answer. Looking at data collected from timing mats in the 2015 Boston Marathon, Figure \ref{Boston} plots how the runners  placed versus how their pace differed from their first 10k and their overall pace. Amongst this elite field, one can see that on average, the runners who placed lower had much higher pace variation than runners who finished in a shorter time (better placing).\\
\\
\begin{figure}[htbp]
\begin{center}
    \includegraphics[scale=.6]{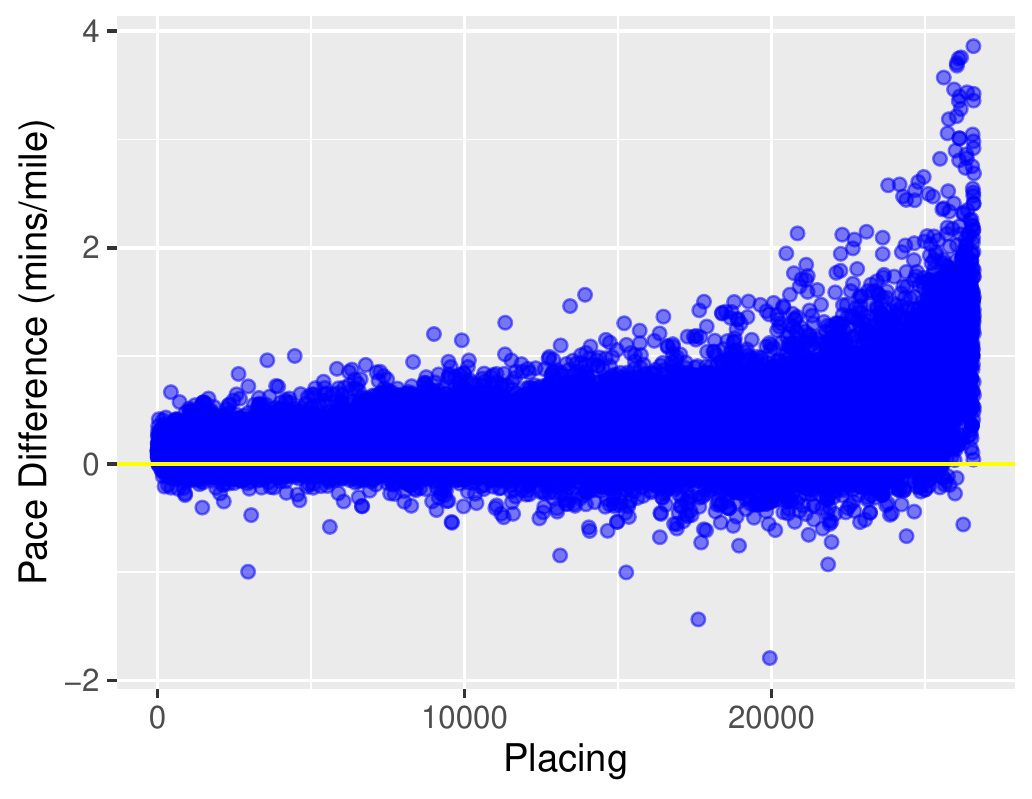} 

\end{center}
\caption{The above data comes from the 2015 Boston Marathon. We graphed finish placement on the x-axis and pace difference on the y-axis. Pace difference is the difference between the runners 5km pace (min/mile) and the runners overall pace (min/mile) for the race. The horizontal line represents a difference of 0 between the two paces.}
\label{Boston}
\end{figure}
While running the fastest race possible has been a goal for runners, it has also been a research topic for scientists for many years. Keller's model developed in the early 1970's, was the first to cast running an optimal race as an optimal control problem and has since been adapted and expanded by others such as Aftalion and Bonnans \cite{Aft}, Woodside \cite{Wood}, and Pitcher \cite{Pitch}. All of the results from these works show the importance of pacing, but lack attention to the different energy systems. Also, no runner model to date, considers in-race nutrition as an energy source available to the runner. In order to not run out of energy (and run the best they can) during a long distance run, a runner must consume food. There is a whole market of products for runners to use in order to deliver necessary fuel to the body quickly.\\
\\
Our goal of this work was to build a more dynamic runner model that better represents the body's energy systems, including energy allocation of fat and carbohydrate energy dependent on velocity, as well as in-race nutrition. We also wanted to minmize the time it takes to run different length races (in particular the marathon distance race) by optimally choosing velocity and in-race nutrition consumption profiles through control of propulsion force and nutrition input. We first discuss the physics, biochemistry, and other factors that can limit or enhance a runner's performance before presenting our model, and optimization techniques. Results and various simulations are presented, followed by discussion.\\ 

\section{Background for Building Our Model}
\subsection{Background and groundwork from past models}
There are many different factors that coaches and runners have to think about in preparing for and running a race.  They have to consider all the training needed to get the runner in the best possible condition for the race as well as all the in-race components.  It takes time to train not just the muscles, but your body's energy pathways and metabolism. All the cumulative knowledge to develop the best training plan and best racing plan began with understanding how the body transfers stored energy into mechanical work. Originally, thermodynamics views exercise as the heat to mechanical energy transfer. This idea was applied to interpretations of the two known energy transfers: aerobic and anaerobic; however, biological energy transfer, or bioenergetics, was discovered to be better suited to describe the anaerobic and aerobic energy transfers due to chemical nature of the exchange.\\
\noindent Keller \cite{Keller1} considered Newton's Second Law and oxygen supply in his model, which was the first model of its kind, describing running races, and treats energy as available oxygen in the muscles per unit mass. His first equation is the equation of motion:
\begin{equation}\label{KellerV}
\frac{dV}{dt}=f - \frac{V}{\tau}
\end{equation}
where $t$ is time, $V(t)$ is the instantaneous velocity, $f(t)$ is the propulsive force per unit mass
, $\frac{V}{\tau}$ is a resistive force per unit mass, and there is a constraint on the force, $0\leq f \leq f_{max}$.
His second equation governs energy, as the oxygen balance equation
 for $e_{an}$:
\begin{equation}\label{KellerE}
\frac{de_{an}}{dt}=\overline{\sigma}-fV
\end{equation}
where $\overline{\sigma}$ is the oxygen breathing and circulation rate in excess of that supplied in non-running state 
with the constraint $e_{an}(t)\geq 0$  for all $0\leq t\leq T$ \cite{Keller1}. Solving the optimization problem and using only one energy type, Keller obtained results very close to world records. For example in the 100m race, Keller's theoretical results garnered a time of 10.07 seconds while the world record at the time was 9.9 seconds, a percent error of $1.7\%$ \cite{Keller1}. As both anaerobic and aerobic processes happen in the human body, modelers like Aftalion and Bonnans \cite{Aft}, as well as others \cite{Wood, Behncke}) included both processes in their model. Woodside \cite{Wood} added a fatigue factor for a long-distance runner model, but there is no consideration of what measures runners can take to combat this. We aim to address this and highlight the need to include 2 separate energies in a runner model. 
\subsection{Quantifiable Values that drive Performance}
When it comes to predicting one's running race performance, there are many ``calculator" tools that have been developed
because the best indicator for a runner's future race performance is their past race performances, as is supported by decades of evidence in research \cite{Rob}.  Although there are new predictors being developed using machine learning that need lots of data, most runners have not run the number of races needed for such a robust data set. Thus, we chose to focus on other physical indicators that we can model through a system of equations that describe the physical and biochemical systems of a runner.\\
\\ 
Two quantifiable values that describe an athletes fitness are: VO2 max, and VLamax. Both play a role in how an athlete expends energy; and more precisely, what metabolic system they are primarily using at a given exertion level.   These values are comparisons of your aerobic and anaerobic systems. The relative strengths of these systems determine how well an athlete will perform at particular activities and can be used for good feedback in training.\\
\\
Which metabolism is being used by the body is dependent on what percentage of one's current VO2 maximum (max) they are using, where VO2 max is the maximum amount of oxygen you can utilize during exercise.  The more fit you are, the higher your VO2 max will be. Note that VO2 max is measured in milliliters of oxygen consumed in one minute, per kilogram of body weight (mL/kg/min). This value ususally ranges between 20-60 with professional athletes holding values as high as 90 \cite{Anj}. One can certainly improve their VO2 max through purposeful training; although factors such as gender and genetics also play a role as the size of one's heart and their body fat percentage determine the body's capacity to carry oxygen through the blood. There are devices, formulas. and VO2 max calculators \cite{Kout,Daniels} that predict your VO2 max based on recent races and current level of activity.\\
\noindent In preparing for long distance races, runners are attempting to improve their VO2 max, as to allow the body to work at higher levels before needing to use the anaerobic system as the energy pathway. This means running at faster velocities without expending as much energy. When walking people are between $15\%$ and $30\%$ of their VO2 max, solely using their aerobic process, with the percentage increasing with exertion.
 When running a long-distance event, the runner mainly uses aerobic respiration and aims to stay at their aerobic threshold for the majority of the race.  One's aerobic threshold is at about 60\% VO2 max and is a level of intensity that can be sustained for a long time, where they are not using their anaerobic metabolism. Runners should only use their anaerobic metabolism for a small portion of the race, as operating at such an intensity cannot be sustained for very long.  The anaerobic system begins to be used between $75\%$ and $85\%$ of one's VO2 max.  During this time, the aerobic system is still used, but at a lower rate as the percentage of VO2 max increases up until $100\%$.  At that point, the anaerobic system is the main energy system with the aerobic system unable to be a significant contributor as the individual is functioning at a level where they have used up all available oxygen and need an energy system that does not require oxygen. \\
\\
A related, more tangible term to know is your velocity at your VO2 max, called VVO2max. It is perhaps a more valuable piece of data to know than just knowing your VO2max as it combines your aerobic capacity and running economy (or running mechanics), into one comparable number \cite{Billat}.
To test for a runner's VVO2max, on a treadmill, their pace is increased, and the administrator observes their VO2 increases linearly until it eventually starts to plateau.  When the VO2 plateaus, even as velocity is increased, no more oxygen is being taken in and the runner has reached their VVO2max. So, VVO2max is the minimum  running velocity with peak oxygen uptake and it's the speed at which there is the lowest contribution from the anaerobic metabolism, while also having maximum oxygen uptake.\\  
\\
Before reaching VO2 max, runners hit their lactate threshold. Lactate is produced in anaerobic metabolism in the muscle whereas  carbon dioxide and water are created in aerobic metabolism. When the muscle lactate accumulates before its removal, the intensity has to be lessened as  the same level of intensity is unsustainable \cite{Scott}. Hitting one’s lactate threshold happens between 70-85$\%$ of their VO2 max. This threshold is a crossover point of the two energy processes where the body begins using anaerobic respiration for energy. When the anaerobic metabolism becomes the main energy process, the blood oxygen level is not enough  to keep up with the energy demand.\\



\noindent  A runner's  VLa or lactate capacity, is the body's anaerobic power, or maximum ability to produce lactate (G Hillson, personal communication, April 21, 2021). The higher the VLa, the worse the runner is at clearing lactate near threshold. Marathon runners want to have a low VLa so that they can use more fat for energy and spare their carbohydrates (G Hillson, personal communication, April 21, 2021). \\
\\
While VO2 max is a good indicator of fitness, a runner's VLa max is what sets the professional runner apart from other runners. When looking for athletes for Nike's sub 2 project \cite{sub2}, they only looked at athletes who had a VO2 max greater than 65, but at that point VO2 max doesn't tell the whole story. A runner with a VO2 max of 65 or one with a VO2 max of 80, could have identical optimal races, depending on their VLa max.  It doesn't matter if a runner has a high VO2 if they aren't able to access all of that VO2 (G Hillson, personal communication, April 21, 2021). In conclusion, a long-distance runner wants to train their body to have a high VO2max (or VVO2max) and a low VLamax.

\subsection{Understanding the Body's Energy Sources}
The anaerobic metaboli-\\sm provides an athlete with quick energy, through the creation of glucose, but only 2 ATP (energy) molecules are obtained; wheras the aerobic metabolism creates 38 ATP molecules through a more length process.
 The aerobic and anaerobic systems occur in separate cellular compartments (mitochondria and cytoplasm respectively) and often at different rates, involve different reactants and products.
Not only is the allocation of the two separate energy processes of interest, but also which fuel is being utilized. Glucose and fatty acids provide most of the fuel required for energy production in skeletal muscles during aerobic exercise whereas glucose is the main source of energy in anaerobic exercise. The body has significantly more energy available in the form of fat, but the rate of using this energy form can not be increased at high exercise intensities when the anaerobic metabolism is the main mechanism. Thus, the body is mainly able to use fatty acids as an energy source at low levels of intensity \cite{biochem}. When you are not exercising, approximately 30 percent of your energy comes from glycogen and 70 percent from fat stores \cite{biochem}. These percentages shift when intensity increases, as does the number of calories being burned. Glucose is preferred as it is readily available and quick metabolized, but is limited.\\

\noindent Figure \ref{areafig} is a diagram showing fuel utilization between fat and glycogen as a function of percent VO2 max. In Figure \ref{areafig}, at low percent of VO2 max, the fuel utilization is low, and the percentage contribution from fat is substantial. With the rise of VO2 max, when running faster, the rate of fuel utilization increases and the percent contribution of fat decreases. 
\begin{figure}[htbp]
\vspace{-3cm}
  \centering
    \includegraphics[scale=.4]{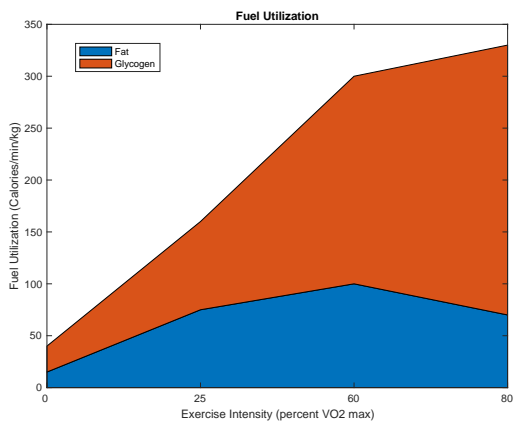} \\
    \vspace{-2.6cm}
\caption{Fuel Utilization: allocation of calories per minute per kilogram used as a function of percent VO2 max (adapted from \cite{biochem}}
\label{areafig}
\end{figure}
\subsection{Why We need Nutrition during the Race}
When running races such as the marathon one must consider pacing strategies. The runner must not only consider oxygen availability, but how much energy the body has stored.  To optimally run these races, runners are mainly using the aerobic system, but their intensity with corresponding percentage of the VO2 max is high enough for the body to use glucose as energy, causing them to still burn through the limited supply of glycogen.  Runners of different levels, masses, running at relatively different VO2 values, burn through this glycogen at different rates; however, it is commonly accepted that on average runners burn just over 100 kilocalories per mile, meaning their stores will be depleted after about 20 miles or between 1.5 hours and  hours \cite{biochem,Murray}.  When these stores are depleted, the term `bonking' is commonly used, where a runner must slow down significantly or forced to stop running altogether and walk.  As only one person to date has completed the marathon in under 2 hours, this is a major problem for long distance runners that must combat. 
In the 1960's, work by \cite{Burke} confirmed that blood glucose concentrations were linked to fatigue and that eating hard candies during a race prevented weakness and fatigue during a race \cite{Murray}.  
Carbohydrates are digested in the small intestine and converted into glucose.  
Glucose is stored mainly in the muscles and the liver as glycogen, a chain of multiple glucose residues, but is also available for
%
immediate use if necessary \cite{biochem}. The body can store about 600 grams of glycogen, with 500 grams stored in the muscles and 100 grams stored in the liver \cite{biochem}, totaling 2400 Kilocalories of glycogen stores in the body. 
\\
\\
During a long distance running race when glycogen stores are depleted, ingested or exogenous carbohydrates are quick sources of energy for the muscles, available once absorbed by the muscles from the blood. Runners typically consume gels, which are a concentrated dose of glucose throughout long distances races.  Most runners solely consume gels, that contain approximately 25 grams of carbohydrates (100 Kilocalorie). 
Taking in nutrition during the race allows a runner to move longer before glycogen stores are depleted and they've reached some anaerobic energy threshold where they must walk. It is known that the muscles absorb plasma glucose at a maximal rate of 1g/min-1.7 g/min \cite{biochem, Wallis,jentjens} depending on the sugar mixture. This means that while it may only take 3-5 minutes for some of the carbohydrates from a 100 calorie intake to reach your muscles, it can take approximately $25$ minutes for all of the carbohydrates from the package to be absorbed. While there is no limit to how many carbohydrates a runner can ingest, too many carbohydrates consumed in a short time will result in digestive discomfort, forcing the runner to slow down.   
\section{Our Model}
We build on the current models by adding in some novel terms as well as completely reformulating the energy equation. Our overall goal is to determine the best nutrition and pacing strategy to use when running a marathon (or other long distance race) in order to finish in the shortest amount of time.  Our first objective, is to develop a runner model that takes into account fuel allocation depending on percentage VO2 max, fuel intake (in-race nutrition), as well as the force which is applied to the ground by the runner, by using a system of differential equations. Next, we would like to cast this as an optimal control problem to determine the optimal velocity and in race nutrition consumption profiles through control of propulsion force and nutrition input.\\
\\
Our system of ordinary differential equations for: $V,\text{Velocity,}\ \ E_F,\text{Fat Energy,}\\ 
E_G,\text{Glycogen Energy,}\ \ N,\text{Nutrition}$\ ,where $V$ will be measured in meters/min, $E_F$ and $E_G$ in KJ/Kg, $N$ in KJ, and our control force $f(t)$ in meters/min$^2$, are given by:
\begin{align}
\frac{dV}{dt}=\ &f(t) - \frac{V}{\tau}\label{myV}\\
\frac{dE_G}{dt}=\ &c_3j(N)-af(t)Vglyc(V)\label{myEG}\\
\frac{dE_F}{dt}=\ &-af(t)V(1-glyc(V))\label{myEF}\\
\frac{dN}{dt}=\ &s(t)-dN-j(N)\label{myN} \\
\vspace{-1cm}
\nonumber \end{align} where $a=\frac{1}{1000}$.  Figure \ref{ogsys} is a diagram of our system, including the velocity compartment.  Velocity is not directly connected to any of the other compartments via energy transfer, but its impact on the two energy compartments can be seen in equations (\ref{myEG}) and (\ref{myEF}). Our nutrition intake strategy is the fuel source function, $s$, entering in the nutrition compartment.
\begin{figure}[]
\vspace{-1.5cm}
\begin{center}
    \includegraphics[scale=.37]{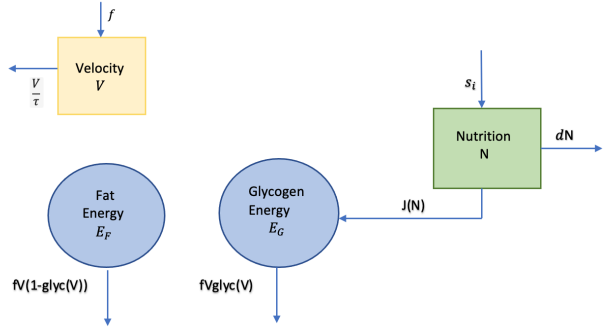} \\
\end{center}
\vspace{-1.5cm}
\caption{Energy-Nutrition-Velocity Diagram}
\label{ogsys}
\end{figure}
In equation (\ref{myV}), there is a propulsive force per unit mass $f(t)$, and $|\frac{V(t)}{\tau}|$ is a resistive force per unit mass. The initial condition for this equation is: $V(0)=0$, as we start from rest, and we have a constraint on the force, $0\leq f(t) \leq f_{max}$.  This is the same equation that Hill, Keller, and others used for velocity \cite{Hill,Keller2,Wood,Behncke,Aft}.\\
\\
As energy is available from two different processes in the body, anaerobic and aerobic metabolisms, we consider two energy sources.  In aerobic metabolism, the body uses energy from both fat and glycogen, while in anaerobic metabolism only energy from glycogen is used, but at a faster rate. Thus, our total energy, $E$, can be written as $E= E_F+E_G$.
The human body has large stores of fat for use as energy, but the body prefers to use glycogen when moving at higher rates; thus, a higher percentage of fuel usage comes from fat at lower velocities. As runners typically take gels that are mainly sugar (not a significant source of fat), we assume there is no input into the fat energy compartment. The body expends energy at a rate, $f(t)V$ (work done). As we are allocating the energy usage between the two sources, our fat energy differential equation (\ref{myEF}), and glycogen energy differential equation (\ref{myEG})
have glycolitic function, $glyc(V)$, a fuel allocation function of one's glycogen energy through the anaerobic and aerobic pathways dependent on the quotient of one's instantaneous velocity, $V$, and their velocity, $VVO2max$, at 100$\%$ VO2max, written $\frac{V}{VVO2max}$. Thus, $1-glyc(V)$ accounts for the fuel allocation function of one's fat energy through the aerobic pathway dependent on velocity compared to one's velocity at 100$\%$ VO2max.\\
\\
It is difficult to know one's current percentage VO2 max; however, one could know a priori their velocity at 100 percent VO2 max.  Billat and Koralsztein \cite{Billat} showed that one's VO2 max and their velocity at VO2 max are linearly related.  As velocity is more easily computable we will use this relationship in our model in determining the allocation of energy at different velocities compared with one's velocity at VO2 max. To approximate $glyc(V)$, we first described it it as a piecewise continuous function on intervals of $\frac{V}{vvmax}$ based on \ref{areafig} from \cite{biochem} and then approximated that function by a function that smooths the points where the derivatives do no exist (described in detail later).
Equations (\ref{myEG}) and (\ref{myEF}) have a convex combination of this function and the fat allocation function of $1-glyc(V)$, as the total work rate must equal $f(t)V$. We obtained the piecewise graph for $glyc(V)$ from Figure \ref{areafig} as well as from the literature \cite{biochem}.  Figure \ref{areafig} gives a good estimate of how the body uses fat energy versus glycogen energy. 
  We assume that the anaerobic pathway is not used until after $60\%$ VO2 max, and that the energy from fat linearly decreases to 0 by $100\%$ VO2 max at which point the anaerobic system, and thus the glycogen compartment, is solely used. Figure \ref{3glyc} is a graph of three different possible $glyc(V)$ functions that we obtained from our Figure \ref{areafig}.\\  
\\
Recall, that while what percentage of one's VO2 max at which they are running is explicitly in the $glyc(V)$ function, VO2 max is not the only value that effects our fuel utilization.  Lactate capacity (VLa), also impacts one's fuel utilization as described earlier. We account for differences in VLa, by considering different structures for our fuel utilization function, $glyc(V)$.  The function is still dependent on percent VO2max, but varies in fuel utilization at particular percent VO2 max values. Figure \ref{3glyc} shows three different $glyc(V)$ functions corresponding to 3 different lactate capacities.  We use these 3 different glyc structures to represent runners who have a low VLa (good), an average VLa (avg), and a high VLa (bad).  The runners with a low VLa (good) are able to run at a faster pace than those with a higher VLa without accumulating as much lactate in their muscles. The VLa is a feature that sets the best professionals apart from one another.\\
\\
In equation (\ref{myEG}) for glycogen energy there is a source term with $j(N)$ that comes from the nutrition differential equation (\ref{myN}).  Our $j(N)$ term is a nutrition consumption function increasing energy available in the muscles in the form of glycogen, but at a bounded rate: $j(N)=c_4N$  with rate constant $c_4= \frac{1}{m}$, the inverse of the runners mass. Our initial conditions for our two energy equations are $E_G=144$ Kilojoules per unit mass, assuming the runner has full glycogen stores, and $E_F(0)=3439$ Kilojoules per unit mass, dependent on the runner's body fat percentage. Both of the energies must stay non-negative, and dictate the choices of $f(t)$ and the corresponding V. Thus, in particular, if $E_G=0$, $f(t)=0$. This is not optimal and would be avoided in an optimal race until the very end. 
\begin{figure}[htbp]
\begin{center}
\vspace{-2cm}
    \includegraphics[scale=.4]{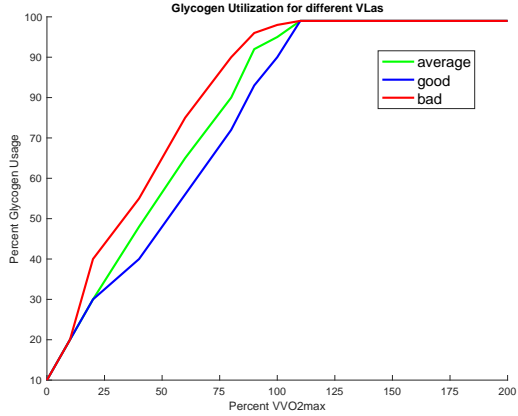} \\
    \vspace{-2.5cm}
\end{center}
\caption{Three different glyc functions corresponding to runners with a good, average, or bad VLa.} 
\label{3glyc}
\end{figure}
Further, in our nutrition energy differential equation, (\ref{myN}), $s(t)$ is a source term from nutrition input such as a gel.  Each gel is roughly 100 Kilocalories (4.18 Kilojoules). The addition of the source term slows the rate at which $E_G$ decerases.
The $-dN$ term in the nutrition differential equation represents the nutrition used for basic bodily function, not available to the muscles for energy usage.  The initial condition for this equation is $N(0)=0$ as we assume there is nothing in this compartment when the race starts. Model parameters can be found in the results section in Table \ref{table:kipchoge}.\\

\section{Optimal Control}
Our second goal is to determine the best strategy to use when running a long distance race in order to finish in the shortest amount of time.  Casting this as an optimal control problem, we could think of this as a problem of maximizing the distance over a fixed time interval or equivalently (proved in \cite{Aft}), as a problem of minimizing time over a fixed distance. We have two controls in our problem: propulsive force, $f(t)$, and fuel intake $s(t)$.
 We will determine the optimal control $f(t)$ for each of the intake strategies and then optimize over those intake strategies.
If we chose to solve the minimum time problem. an extra isoperimetric constraint must be considered.  Thus, we choose to solve the maximum distance problem.  
\subsection{Optimal Control Problem Setup}
For the maximum distance problem, assuming we have a nutrition strategy, $s_i$, we maximize the objective functional:\\
  \begin{equation}\label{myOpt}
J_i(f)=\int_0^TV(t)dt
\end{equation}
for $s_i\in S$ (a finite set of nutrition strategies), where $$S=\{s_1,s_2,...,s_N\in L^2(0,T)| 0\leq s_i(t)\leq s_{max}, a.e., i=1,2,...,N\}$$ we determine an optimal control depending on $s_i$, and then optimize over our set $S$. We obtain continuous velocity and energy profiles corresponding to our control force and then optimize over discrete $s_i$.  We solve the following with fixed $T$:
\begin{equation}\label{eqtnU}
\max_{S}\max_{A_i}J(s_i,f)=\max_{S}\max_{A_i}\int_0^TV(t)dt
\end{equation}
 with bounded controls:\ \ $0\leq f\leq f_{max}$, $0\leq s_i(t)\leq s(t)_{max}$, and control set $$U_i= s_i \times \{f \in L^2 (0, T) \, | \, 0 \leq f(t) \leq f_{max}, a.e.\}$$
subject to state constraints:\ \ $0\leq E_F(t)$\ \ ,\ \ $0\leq E_G(t)$, for $t \in [0,T]$\\
initial conditions:\ \ $V(0)=0$,\ \ $E_F(0)=K$,\ \ $E_G(0)=144$,\ \ $N(0)=0$,\\
and state equations (\ref{myV})-(\ref{myN}).
Our admissible control set for $s_i \in S$  is $$A_i = \{ (s_i, f) \in U_i \, | \,  E_F(t) \geq 0, E_G(t) \geq 0,\ \text{for} \, t \geq 0 \}$$
Note that $V(t)$ and $N(t)$ are non-negative, from (\ref{myV},\ref{myN}).
\subsection{Existence of the optimal control}

\begin{theorem} 
Given a nutrition strategy $s_i$, there exist an optimal control $f^*$ that solves the optimal control problem of maximizing (\ref{myOpt}) with corresponding state equations (\ref{myV})-(\ref{myN}) over admissible controls, $$\mathcal{U}=\{f\in L^2 (0, T) \, | \, 0 \leq f(t) \leq f_{max}, a.e., \, E_F(t)\geq 0,\ E_G(t)\geq 0,\ \text{for}\ t\geq0\}.$$ 
\end{theorem}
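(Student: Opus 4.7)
The plan is to apply the direct method of the calculus of variations to the maximization problem for the fixed nutrition strategy $s_i$. First I would verify nonemptiness of $A_i$: taking $f\equiv 0$ gives $V\equiv 0$, $E_F\equiv K$, and $dE_G/dt = c_3 j(N) \geq 0$ (since $N(0)=0$ and $N$ stays nonnegative), so all state constraints hold. Next, because $0 \leq f \leq f_{\max}$ and $V(0)=0$, the linear $V$-equation gives $V(t) \leq f_{\max}\tau$, hence $J_i(f) \leq f_{\max}\tau T$ and $M := \sup_{f \in A_i} J_i(f)$ is finite. Choose a maximizing sequence $\{f_n\}\subset A_i$ with $J_i(f_n) \to M$.

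For compactness, note that $\{f_n\}$ is bounded in $L^2(0,T)$, so along a subsequence (not relabeled) $f_n \rightharpoonup f^*$ weakly in $L^2$, with $0 \leq f^* \leq f_{\max}$ a.e.\ preserved by weak closure of the box constraint. Writing the solution of (\ref{myV}) as
\begin{equation*}
V_n(t) = \int_0^t e^{-(t-\sigma)/\tau} f_n(\sigma)\,d\sigma,
\end{equation*}
the continuity of the kernel together with weak convergence of $f_n$ yields pointwise convergence of $V_n$ to the corresponding $V^*$, and uniform convergence on $[0,T]$ follows from equicontinuity of $\{V_n\}$ (since $|dV_n/dt|\leq f_{\max} + V_n/\tau$ is uniformly bounded). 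Since $s_i$ is fixed and (\ref{myN}) is independent of $f$, $N_n$ is a single function $N^*$. The energy equations (\ref{myEG})--(\ref{myEF}) then give uniform bounds and equicontinuity for $E_{G,n}$ and $E_{F,n}$, so by Arzel\`a--Ascoli a further subsequence converges uniformly to limits $E_G^*, E_F^*$.

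The key step, and the main obstacle, is passing to the limit in the nonlinear right-hand sides $a f_n V_n\, glyc(V_n)$ and $a f_n V_n(1-glyc(V_n))$ of the energy equations. Because $V_n \to V^*$ uniformly and $glyc$ is continuous, the functions $V_n\, glyc(V_n)$ and $V_n(1-glyc(V_n))$ converge uniformly (hence strongly in $L^2$) to $V^* glyc(V^*)$ and $V^*(1-glyc(V^*))$. Combined with the weak $L^2$ convergence of $f_n$ to $f^*$, the weak--strong product principle gives weak $L^1$ convergence of the products, and the integral forms of (\ref{myEG})--(\ref{myEF}) carry over in the limit so that $(V^*, E_G^*, E_F^*, N^*)$ satisfies the full state system driven by $f^*$. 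The state constraints $E_G^*\geq 0$ and $E_F^*\geq 0$ are preserved by uniform convergence, so $f^*\in A_i$.

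Finally, uniform convergence $V_n\to V^*$ on $[0,T]$ gives
\begin{equation*}
J_i(f^*) = \int_0^T V^*(t)\,dt = \lim_{n\to\infty}\int_0^T V_n(t)\,dt = \lim_{n\to\infty} J_i(f_n) = M,
\end{equation*}
so $f^*$ is the desired optimal control. The only subtlety worth flagging in the write-up is the justification of weak--strong convergence in the bilinear terms, together with the observation that the state constraints $E_F\geq 0, E_G\geq 0$ define a set that is closed under the uniform convergence of states established above.
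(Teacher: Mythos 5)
Your proposal is correct and follows essentially the same route as the paper's proof: a maximizing sequence, weak $L^2$ compactness of the controls, uniform convergence of the states via equicontinuity and Arzel\`a--Ascoli, a weak--strong argument to pass to the limit in the bilinear terms $fV\,glyc(V)$ of the energy equations, and closedness of the state constraints under uniform convergence. The only cosmetic differences are your use of the explicit variation-of-constants formula for $V$ and your (correct) observation that $N$ is independent of $f$, neither of which changes the structure of the argument.
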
 
\begin{proof}
First, the admissible control  set is nonempty since the control $f=0$ satisfies the conditions. 
 Our objective functional, $J(f)$ is  uniformly bounded due to bounds on the states and controls.  
 There exists a maximizing sequence of control functions denoted by ${f^n}$ such that $$\lim_{n\rightarrow \infty}J(f^n)=\sup_{f\in \mathcal{U}} J(f),$$  
 and  a corresponding sequence of states $V^n,E_{F}^n,E_{G}^n,N^n$.  Note that the control sequence and the state sequences are uniformly bounded.
 By the Banach-Alaoglu theorem \cite{Fried}, there exists a control $f^*$ such that on a subsequence $f^n\rightharpoonup f^*$ weakly in $L^2(0,T)$. 
From the state differential equations we see that the derivative sequences are also uniformly bounded. Thus, $V^n,E_{F}^n,E_{G}^n,N^n$ are uniformly Lipschitz and thus equicontinuous.  Therefore by the Arzela-Ascoli theorem, on a subsequence, 
we have uniform convergence of our corresponding state functions $$V^n\rightarrow V^*, E_{F}^n\rightarrow E_F^*, E_{G}^n\rightarrow E_G^*, N^n\rightarrow N^*\ \ .$$ We need to show that $f^*$ is an optimal control and $V^*,E_F^*,E_G^*,N^*$ are corresponding optimal states. To show those states correspond to control to $f^*$, we illustrate this by showing the $E$ differential equation: 
%
$$E_F^n(t)-E_F(0)=\frac{-1}{1000}\int_0^tf^nV^n(1-glyc(V^n))ds$$ 
and
\begin{align*}
&|\int_0^t\frac{f^nV^n}{1000}(1-glyc(V^n))ds-\int_0^t\frac{f^*V^*}{1000}(1-glyc(V^*))ds|\\
\leq\ \ & ||f^n||_2||V^n-V^*||_2+|\int_0^tV^*(f^n-f^*)ds|\ \ \ \ (\text{by Holder's Inequality})\\
\end{align*}
 We have that all 4 states correspond to the control $f^*$\ . 
Next, $f^*$ is an optimal control since, 
\begin{align*}
\lim_{n\rightarrow \infty}J(f^n)&=
\lim_{n\rightarrow \infty}\int_0^TV^ndt  
=\int_0^TV^*dt 
= J(f^*)\ .
\end{align*}
\end{proof}
Now that we've proven an optimal control exists, we begin solving the optimal control problem using Pontryagin's Maximum Principle (PMP) \cite{Pontr} by determining the set of necessary conditions, using the Hamiltonian with state constraints. We solve the maximum distance problem, obtaining an $f^*$ for each nutrition strategy, $s_i$, and maximize over our set of finite strategies $S$.
\subsection{Necessary Conditions for the Maximum Distance Problem}
We determine the necessary conditions by using PMP \cite{Pontr} to compute the Lagrangian, which is the Hamiltonian with state constraints:
\begin{align}
\mathcal{L}&=V+\lambda_1(f(t)-\frac{V}{\tau})+\lambda_2(-af(t)V(1-glyc(V)))+\lambda_3(c_3j(N)-af(t)Vglyc(V))\label{Lagr} \nonumber \\
&+\lambda_4(s(t)-dN-j(N))+\eta E_G\nonumber \ ,
\end{align}
 where the penalty function $\eta(t) \geq 0$ is a Lagrangian multiplier appended to $E_G$ as a state constraint penalty. Note that we do not have a state constraint penalty for $E_F$ as $E_F$ will not get close to $0$ over the time interval. The function $\eta$ satisfies: $\eta \equiv 0$ where $E_G^*>0$, and $\eta \geq 0$ otherwise when the state constraint is tight (i.e. $E_G=0)$  Since we seek to maximize $\mathcal{L}$ with respect to $f(t)$, a state variable violating the constraint would decrease $\mathcal{L}$ and not be optimal as $\eta E_G < 0$.\\
 \\
 Also, PMP\cite{Pontr} gives the existence of adjoint functions , $\lambda_1$, $\lambda_2$,$\lambda_3$ and $\lambda_4$, satisfying  system  (\ref{lam1}) (\ref{lam4}) appending the state equations to the Hamiltonian.
\begin{align}
\lambda_1'=-\frac{\partial \mathcal{L}}{\partial V}&= -(1-\frac{\lambda_1}{\tau}-\lambda_2af(t)+\lambda_2af(t)V\frac{\partial glyc}{\partial V}+\lambda_2f(t)aglyc(V)-\label{lam1}\\
&\lambda_3af(t)V\frac{\partial glyc}{\partial V}-\lambda_3af(t)glyc(V))\nonumber\\
&\nonumber \\
\lambda_2'=-\frac{\partial \mathcal{L}}{\partial E_F} &= 0\ \ , \ \ \lambda_3'=-\frac{\partial \mathcal{L}}{\partial E_G} = -\eta\\
 &\nonumber\\
\lambda_4'=-\frac{\partial \mathcal{L}}{\partial N} &= -(c_3c_4\lambda_3-\lambda_4d-\lambda_4c_4)\label{lam4}
\end{align}
with transversality conditions $\lambda_1(T)=0$, $\lambda_2(T)=0$, $\lambda_3(T)=0$, $\lambda_4(T)=0$\\
\\
 For $t\in [0,T]$, due to it's initial condition, $E_F$, stays positive for any choices of $f(t)$ in the admissible control set and thus $\lambda_2=0$ over $[0,T]$.
Since our objective functional and state equations are linear in the control, we consider the different signs of the derivative of $\mathcal{L}$ with respect to $f(t)$, which is the switching function  $\phi(t)$.
\begin{equation}
\phi(t)= \frac{\partial \mathcal{L}}{\partial f}=\lambda_1-\lambda_3aVglyc(V)\label{phi}
\end{equation}
At time $t$, for $\phi(t)<0$ we will have a maximum when $f^*(t)=0$, whereas for $\phi(t)>0$ we will have a maximum when $f^*(t)=f_{max}$. However, PMP \cite{Pontr} does not tell us what happens when $\phi(t)=0$.  More information with respect to control $f$ than $\frac{\partial \mathcal{L}}{\partial f} = 0$ is needed when $\phi(t)=0$ on a subinterval.  If this happens only for for finite number of time points, we would have a bang-bang control and those points would represent the switching times. On the other hand, if $\phi(t)=0$ on a subinterval of time, then we would have a singular control.\\
\\
For optimal control problems with state constraints there is the possibility of boundary singular sub-arcs as well as interior singular sub-arcs. Boundary singular sub-arcs occur when the constraint is tight, meaning when $E_G(t)=0$. If $E_G(t)=0$, that implies $E_G'=0$, and solving (\ref{myEG}) to obtain $f_{s,b}$,
provided that $V(glyc(V)) \neq 0$. Note that as $glyc(V)$ accounts for the percent of glycogen energy being used and there is always a percentage of fuel usage coming from glycogen, this function will never be $0$. Due to the structure of the differential equation for $V$, as soon as $f$ is positive, $V$ is also positive. Hence, $V(glyc(V)) \neq 0$ when the boundary singular arc is active. 
\\
\\
If the constraint is not tight, then we have the possibility of an interior singular sub-arc that we can obtain by differentiating the switching function with respect to time twice, noting that on an interior singular sub-arc $\eta(t)=0$, and solving for $f_{singular}$.\\
When the constraint $E_G(t)$ is not tight, $\eta(t)=0$; however, when it is tight we solve for it using the fact that over the small time interval where the glycogen energy constraint is tight, the switching function, $\phi(t)$, is $0$ on a boundary subarc and thus also $\phi'(t)=0$. We are then able to solve this equation for $\eta(t)$ to get our complete characterization of $\eta(t)$. \\
\noindent For singular subarcs, one incurs an additional necessary condition known as the generalized Legendre-Clebsch (GLC) condition \cite{GLC}. The necessary GLC condition for the existence of a singular subarc for an optimal control problem, in which the control appears in $\phi''(t)$ is
$$\frac{\partial}{\partial f}\frac{d^2}{dt^2}\frac{\partial}{\partial f}(\mathcal{L})\ = \ \frac{\partial}{\partial f}\frac{d^2\phi}{dt^2}\ \ \ \geq \ \ 0,$$\\
a condition one can verify for this problem.
For our last necessary condition we have that the Transversality Conditions:\ \ $\lambda_1(T)= \lambda_2(T)= \lambda_4(T)=0\ .$
From the above, our optimal control, force, has the following structure:\\

\begin{align} 
f(t) = \left\{ \begin{array}{ccc}
               0 & for & \phi(t) < 0\nonumber\\
                \ & \ &\nonumber\\
                f_{singular}(t)  & for & \phi(t)=0\,\ E_G \neq 0 \nonumber\\
                    \ & \ &\label{foc}\\
               f_{s,b}(t)  & for &  \phi(t)=0\,\ E_G = 0 \nonumber\\
                    \ & \ & \nonumber\\
              f_{max} & for & \phi(t) > 0\ \nonumber \\
                \end{array}\nonumber \right.
\end{align}
\\ 
This $f(t)$ represent the force profile of which a runner should adhere to in order to run the optimal race. We have shown that our problem meets all of the necessary conditions for existence of a singular interior sub-arc, including the GLC condition, which suggests that the optimal trajectory could include a singular component.  To summarize the control trajectory, we know that the optimal control begins with a maximum force sub-arc, that for a race with large enough $T$, an optimal control must be comprised of more than just a maximum force arc, that a singular boundary force sub- arc exists (and likely at the end of the time interval), that a singular interior sub-arc is likely, and that intuitivly, it is unlikely for a zero force arc to exist, as that would not be optimal. 
We believe that for our system, the control is singular for the majority of the event, and is comprised of a maximum force sub-arc, followed by a singular interior sub-arc, and finishing with a singular boundary sub-arc, as has been shown to be the trajectory in simpler systems in \cite{Keller1,Pitch,Aft,Wood}.  Unfortunately one is unable to easily obtain an exact solution structure with switching times. As the problem is linear in the control, and believed to be largely singular, along with having state costraints and the nutrition pulse function, it is challenging to solve even using adjoints and penalty multipliers. Thus, we chose to discretize the system in order to implement obtaining an approximate solution to the optimal control problem.  In the next section we discuss our nutrition strategies. We then outline our approximation method where we discretize the problem and add a penalization technique to reduce the noise in the solution. 
\subsection{Nutrition Strategies}
Not only are we concerned with what force $f$ value will maximize the distance over a fixed time T, but also with which in race nutrition strategy, $s_i$ is optimal.  As there are many philosophies within the running community about how often you should take nutrition during a race, we test 15 different nutrition strategies, $S=\{s_0,s_1,s_2,\dots, s_{15}\},$ shown in Table \ref{geltable}. \\
%
%
\\
During Eulid Kipchoge's race to the beat the 2 hour marathon barrier, his nutrition consisted of hydrogels by Maurteen. He consumed 100g Carbs per hour (which would be 4 baked sweet potatoes in solid food for reference) \cite{sub2}, altough there is no comment on his exact feeding regime. This amount of carbohydrates is pretty high for a runner to tolerate during a marathon due to the increased gastrointestinal movement in running. This number of carbohydrates is more in line with what cyclist would consume during their longer races. It is expected that at his level, his nutrition provided him a 1-3 percent boost in performance compared to him solely consuming water during the race. This is in contrast to the practice most runners follow of taking about 50 carbs per hour, an area of research that is continuously being studied by nutrition companies. We have many different nutrition strategies over which we are optimizing, two of which include simulating Kipchoge's known race intake, and a standard intake strategy used by average runners.  We will look at the percentage improvement from using the determined strategy versus consuming no carbohydrates.  As a reminder, we do not have stomach sensitivity to food throughout the race in our model yet, which could skew the results in favor of taking as much nutrition in as possible.\\
\begin{table}[]
\caption{Scenarios of nutrition intake during a marathon race}
\begin{center}
\begin{tabular}{ |c|p{8cm}| } 
 \hline
 $s_0$,$s_1$,$s_2$,$s_3$, $s_4$,$s_5$,$s_6$, $s_7$ & 100 Calorie gels spread evenly 0,1,2,3,4,5, 11, and 24  times throughout the race.  \\
 \hline
 $s_8$  & One 100 Calorie gel taken towards the beginning of the race.  \\
\hline
 $s_9$  & One 100 Calorie gel taken towards the end of the race. \\
\hline
 $s_{10}$  &Four 100 Calorie gels: 2 taken early in the race, 2 taken toward the end of the race. \\
\hline
 $s_{11}$  & Four 200 Calorie gels spread evenly throughout the race. \\
\hline
 $s_{12}$, $s_{13}$ & 250 Calorie gels spread evenly 2,4 times throughout the race  \\
\hline
 $s_{14}$  & Ten 50 Calorie gels taken evenly throughout the race.  \\
\hline
 $s_{15}$  & One 250 Calorie gel taken, 1 100 Calorie gel taken, 1 250 Calorie gel taken.  \\
\hline
\end{tabular}
\end{center}
\label{geltable}
\end{table}
\section{Methods}
Over time, optimal control problems have been solved and approximated using many different numerical techniques. There are many numerical examples in mathematical biology that use the Forward Backwards sweep method \cite{hackbusch, Lenhart}. Programs such as GPOPS and PASA have been developed to handle particular types of optimal control problems \cite{HagerZ,Rao}. 
MATLAB  has a minimization tool, fmincon, that is built to handle a variety of optimization problems. We tried to use the forward backwards sweep method, the packages GPOPS and PASA, as well as fmincon in the continuous setting; however, none of these methods were robust enough to handle this particular problem. Thus, we discretized our problem and used fmincon, inputting our differential equation system in through equality constraints.
We began the discretization of our optimal control problem by partitioning our time interval, $[0,T]$ using M+1 equally spaces nodes, $0=t_0<t_1<\cdots <t_M=T$. We used a left rectangular approximation for the objective functional, obtaining the maximization problem: 
\begin{equation}\label{discJ}
\max_f J(f)=\max_f[\sum_{k=0}^{M-1}hV_k]
\end{equation}
where $h=\frac{T}{M}$\ , with bounded controls:\ \ $0\leq f_k\leq f_{max}=36000$ meters/min$^2$,\ and $f=(f_0,\dots , f_{M-1})$, with $s=(s_1,\dots,s_{15})\in S$, and $0\leq s_{i}(t_k)\leq s_{max}$ for all $0<k<M-1$\ , and with initial conditions:\ \ $V_0=0$, $E_{F,0}=K$, $E_{G,0}=144$, $N_0=0$.\\
\\
Next we use a forward Euler approximation for the state equations and obtain:\\
\begin{align}
V_{k+1}&=V_k+h(f_k-\frac{V_k}{\tau})\label{discV}\\
&\nonumber\\
E_{F,k+1}&=E_{F,k}+h(-af_kV_k(1-glyc(V_k)))\label{discEF}\\
&\nonumber\\
E_{G,k+1}&=E_{G,k}+h(c_3j(N_k)-af_kV_k(glyc(V_k)))\label{discEG}\\
&\nonumber\\
N_{k+1}&=N_k+h(s(t_k)-dN_k-j(N_k))\label{discN}
\end{align}
We also now have the discretized version of $J(N_k)$:\ \ $j(N_k)=c_4N_k\ .$
We discretized our glyc function, $glyc(V_k)$,  by using a spline interpolator in MATLAB, described in the next section, and dropped in each $i^{th}$ nutrition scenario, $s_{k}$, corresponding to the time points each nutrition strategy designated. To optimize our system, while satisfying the constraints, we use fmincon as the minimization solver on the discretized system, using a left-rectangular integral approximation for the objective function.\\
\\
``Fmincon is a gradient-based method that is designed to work on problems where the objective and constraint functions are both continuous and have continuous first derivatives" \cite{Matlab}, but due to our constraints and nutrition input, it was helpful to write it as a discrete system.  Fmincon is set to accept: the objective function, a starting guess for the control vector, X0, inequality linear constraints, equality linear constraints, lower and upper bounds for the state variables, nonlinear inequality and equality constraints, as well as options that allow the user to change the MATLAB settings for various features or provide the system with more information. 
Our discretized objective function was originally $$J=-\delta\sum_{i=1}^{T-1}V_k$$ \\where $V_k$ occurs in our $X$ vector as entries $X(3n+1:4n)$ and $\delta=\frac{T}{M}$ where $T$ is the length of the race and $M$ is the number of mesh points, including $t=0$, but we will modify this objective function to include minimizing the total variation in $f$.  In order to modify the objective function to include minimizing the total variation, we added 2 additional state variables, $\iota$ and $\zeta$ \cite{Atkins}, such that the variation in 2 time points in $f$ is written as $$f(i+1)-f(i)=\zeta(i)-\iota(i)\ .$$
\\
Our starting guess vector, $X0$, that gives an initial placeholder for our state vector was: $X0= [\overbrace{ 36000\ \dots 36000}^T\ \overbrace{0\ \dots 0}^{6T-2}].$ Note that each state vector variable is a column vector of length $(T \times 1)$ and the two vectors used for variation penalization, $\zeta$ and $\iota$ are of length $(T-1) \times 1$ due to their structure. Our initial condition vector was:\ \ $[f(1)\ E_F(1)\ E_G(1)\ V(1)\ N(1)\ \zeta(1)\ \iota(1)]=[36000\ 3439\ 144\ 0\ 0\ 0\ 0]\ .$ As we do not have any inequality constraints and our equality linear constraints were written as:\ \ $A_{eq}x=b_{eq}$, where $Aeq$ is a $n \times n$ matrix, $x$ is our solution column vector of length $n \times 1$, and $beq$, of length $n \times 1$ is the righthand side of our equality linear constraints.
\\
\\
As the velocity and nutrition equations are linear, we input those as our linear equality constraints. Our 2 energy equations are non-linear and thus are input as non-linear equality constraints.  One important feature for our problem is our energy constraints, $E_G(i)\geq 0$ and $E_F(i)\geq0$ for every $i\in[0,T]$. With our problem set up in its current format, we are able to simply input our lower bounds for our fat and glycogen energies as 0. We input the runners physical force, velocity, and energy capacities as appropriate upper bounds. We also give our variables appropriate initial conditions for their starting out fat and glycogen energies, 0 velocity, and 0 in race nutrition.\\ 
\\  
In our glycogen energy equation we have a function we call ``glyc" and its counter part ``1-glyc" in our fat energy equation.
We approximated these from a fuel utilization Figure \ref{3glyc} adapted from one in Stipanuk and Caudill's textbook  \cite{biochem}, by creating 2 vectors of points where one vector represents the percent VO2 max and the other represent the percent glycogen used.
To smooth out our glyc function, we used a function in MATLAB called spline.  Spline is a cubic piecewise polynomial interpolator that is continuous and twice differentiable everywhere. It takes the 2 vector of points you supply it with and creates $n-1$ cubic polynomials that connect at the supplied $x$ vector given.\\
\\
One common issue in control problems where the control has a singular sub-arc is variation. 
Ding and Lenhart \cite{LenhartD} as well as Caponigro et al. \cite{Cap} both penalized their original objective functional to regularize the chattering.  In order to obtain a reasonable trajectory for our runner, we chose to penalize our objective functional by adding a term to our objective functional that bounded the total variation \cite{Atkins}. Adding this penalty, not only amends the objective function to include penalizing for variation, it adds a 5th constraint to our problem. By adding this penalty we are decreasing the total variation in the solution trajectory. We can express the total variation, $\mathcal{V}(f)$, in the control \cite{Atkins} as: $$ \mathcal{V}(f)=\sup \sum_{i=0}^{N-1}| f(i+1)-f(i)|$$ where the supremum is being taken over all possible partitions of our time interval. Due to the difficulty of differentiating absolute value functions we decompose the absolute value term, using 2 new $T-1$ vectors $\zeta(i)$ and $\iota(i)$, whose entries are non-negative. Each entry of $\zeta(i)$ and $\iota(i)$ will be defined as $$f(i+1)-f(i)=\zeta(i)-\iota(i)$$
 We can think of this decomposition as satisfying the following 2 conditions \cite{Atkins}:\\
 \\
  If $f(i+1)-f(i)>0$, then $\zeta(i)=f(i+1)-f(i)$ and $\iota(i)=0$;\\
  If $f(i+1)-f(i)\leq 0$, then $\zeta(i)=0$ and $\iota(i)=-(f(i+1)-f(i))$,\\
where $\zeta(i)\geq 0$, $\iota(i)\geq 0$ and either $\zeta(i)=0$ or $\iota(i)=0$
 \\
\\
Appending this penalty to the objective function we have:  $$J=-\delta\sum_{i=1}^{T-1}V(i)+ p\sum_{i=1}^{T-2}(\zeta(i)+\iota(i))\ ,$$ and resulted in the extra linear equality constraint $$ceq5(i)=  -f(i+1) + f(i) + \zeta(i) - \iota(i)\ .$$ 
\\
The coefficient $p$ of our bounded variation scales the degree to which we penalize variation.  If $p$ is large, there is a more emphasis being placed on minimizing the variation of the control variable, force.\\
\section{Results and Discussion}
\subsection{World record optimal results}
We found that using a direct discrete optimization without adjoint functions, fmincon, was the most efficient approximation method and captured all the dynamics. Table \ref{table:kipchoge} shows the constants and coefficients with their scientific meaning and their units. Five parameters that can be chosen depending on the individual are: $E_G(0)$ (Initial glycogen energy), mass, VLa type, nutrition uptake rate,$c_4$,and VVO2 max, shown at the end of the table.
%
We first chose to simulate the optimal race of the current world record holder.  In the attempt to break the 2 hour marathon barrier, experts collaborated to optimize a set of runners VLa max, VO2 max, general running economy, nutrition plan, as well as pick the perfect race course for these runners.  The parameter values for our simulation can be found in Table \ref{table:kipchoge}. Note that the 3 different VLa types correspond to runners with 3 different VLamax's. These particular parameter values were picked to reflect appropriate values for a professional (world record breaking) runner. Plots of the state and control variables are seen in Figure \ref{kipfig}. \\
\\
In Figure \ref{kipfig} we see that the runner's velocity rapidly increases from 0 to reach an approximately constant velocity of $357$ m/min, that they can maintain for the entirety of the race.  The plot of the runner's propulsion force, similar to the runner's velocity plot, shows the runner's force quickly increasing from 0 m/min$^2$ to a constant force of $2.14 \cdot 10^4$ m/min$^2$. The runner's  fat energy, $E_F$, decreases linearly, from $3439$ KJ/Kg to $3390$, nowhere near the energy constraint, $E_F\geq 0$.  The runner's glycogen energy decreases from its initial value of $E_G(0)=144$ KJ/Kg to its final value of approximately $0$ KJ/Kg.  We see bumps in this subplot corresponding to the nutrition fueling.  In this simulation we assumed that the runner took four 200 calorie gels at $t=20$, $46$, $71$, $97$, which can be seen in the nutrition subplot in Figure \ref{kipfig}, and the runner took 120 minutes to complete the distance.\\ 
\begin{table}[]
	\centering
	\caption{Parameters Used for World Record Holder}
		\begin{tabular}{||c c c c||}
			\hline 
			Parameter & Value & Unit & Meaning \\ [0.5ex] 
			\hline\hline
			$T$ & $120$&minutes & length of race\\
			\hline
			$M$ &$T+1$ & minutes &  number mesh points including $t=0$ \\ 
			\hline
			$\tau$& $1/60$ &min & internal resistant force constant\\ 
			\hline
			$d$ & $0.005$&min$^{-1}$ & loss of nutrition \\ 
			\hline
			$sm$& $1/3600$&(seconds)$^{-2}$ &seconds to minutes conversion\\ 
			\hline
			$c_3$& $1/m$&kg$^{-1}$& mass conversion constant \\ 
			\hline
			$p$&$0.5$& unitless & variation penalty coefficient \\
			\hline
			$a$& $1/1000$&kilojoules/joules & unit conversion constant  \\ 
			\hline
			$\delta$ & $120/121$&T/M& discretization parameter\\ 
			\hline
			$m$  &$55$&kilogram & runner mass \\ 
			\hline
			$E_G(0)$& $144$ & kilojoules/kg & initial energy at start of race\\
			\hline
			$c_4$& 1/6 &min$^{-1}$ & nutrition uptake rate \\ 
  			\hline
			VVO2max& 402 &meters/min & Velocity at 100\% VO2max \\ 
			\hline
			VLa type& good  &unitless & good, average, bad  in glyc function \\ 
			\hline
		\end{tabular}
		
		\label{table:kipchoge}
\end{table}
%
%
%
%
%

\begin{figure}[htbp]
\vspace{-3.5cm}
\begin{center}
    \includegraphics[scale=.4]{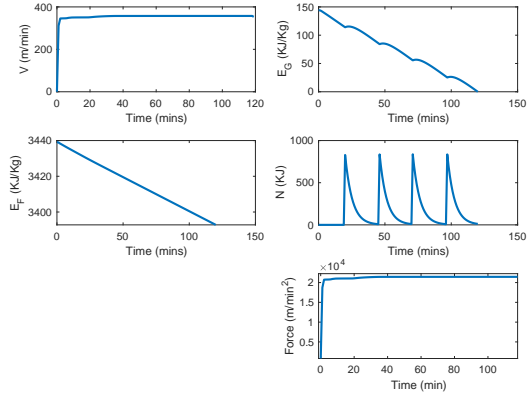} \\
    \vspace{-3cm}
\end{center}

\caption{World record marathon simulation, with four 200 calorie gels}
\label{kipfig}
\end{figure}
\noindent During the world record breaking race, the runner did not consume gels in the same manner that we implement, but did consume an equivalent amount of energy. The VVO2max was set to be $402$ meters per minute.  It is expected that world class runners have a VVO2 max in this range. This VVO2max is approximately equivalent to a runner running $4$min/mile.  We see that the distance achieved in our results is slightly longer than that of a marathon. The fastest marathon ever run thus far is 1:59:40. In the simulation from our model, the runner runs 120 minutes in 42.5 km, which is equivalent to the runner running a 1:59:09 marathon. This is only a $0.4$ percent difference between the current world record and our results.  This difference in average pace would be the runner running 4:33 minutes per mile instead of 4:34 minutes per mile. Note, that although our results are extremely close to the current world record, parameters could be changed to address a runner having an even better VLa, or better running economy. We also didn't explicitly have ``shoe type" in our model but a better shoe could result in better running economy which would result in a better VVO2 max.  It is believed that the world record time will continue to get faster over time.
\subsection{Results: Varying runner dependent parameters} 
 A runner's in-race nutrition, VLamax, and their VVO2max are the driving factors behind a runners performance, and thus it is important to see how changing these impacts the results. We first describe the results we obtained from simulations of our system with each of the 15 different nutrition strategies. The parameters used for these simulations are all the same ones used in Table \ref{table:kipchoge}, except, we changed $T=135$, which also changes $\delta=\frac{135}{136}$.\\
\\
Recall our 15 different nutrition strategies, as well as taking no nutrition labeled $s_0$, from Table \ref{geltable}. Table \ref{table:popp} shows the distance completed with each of these strategies. There are several conclusions we can draw from these results, which we will present after looking at graphs of several of the nutrition strategies. First we compare a strategies of taken 0 nutrition versus five 100 cal gels. From Table \ref{table:popp} we can see that the runner is able to run between $0.5$ and $0.7$ kilometers further per each additional gel, which translates to approximately $1.5\%$ increase in distance per gel. This means that by taking five 100 calorie gels instead of taking no nutrition, the runner has a $7.75\%$ improvement in performance. In Figure \ref{nogelsppt5} and Figure \ref{5gelsppt5} we see similar strategies in the force and velocity subplots; however the proplsion force and therefore the velocity that the runner who took 5 gels is able to maintain is significantly higher than that of the runner who took no nutrition.\\ 
\begin{figure}[htbp]
\vspace{-2cm}
\begin{center}
    \includegraphics[scale=.4]{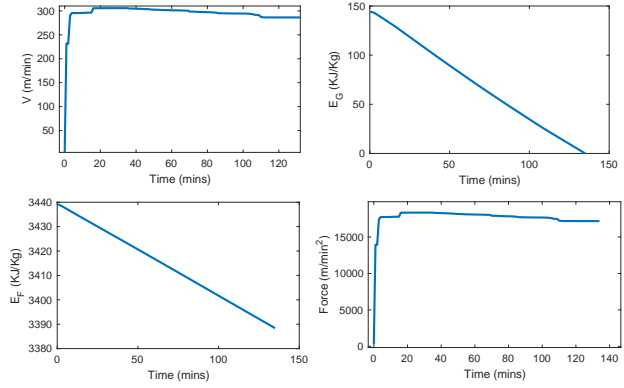} \\
    \vspace{-3cm}
\end{center}

\caption{Marathon simulation with 3 states and optimal force with no gels }
\label{nogelsppt5}
\end{figure}
\begin{figure}[htbp]
\vspace{-3cm}
\begin{center}
    \includegraphics[scale=.45]{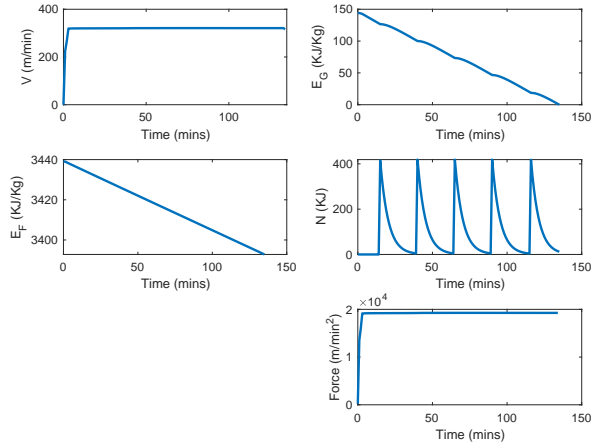} \\
    \vspace{-3cm}
\end{center}

\caption{Marathon simulation with 4 states and optimal force with five 100 calorie gels}
\label{5gelsppt5}
\end{figure}
\begin{table}[]
	\begin{center}
	\caption{Total Distance Achieved with Different Nutrition Strategies }
		\begin{tabular}{||c c c c||}
			\hline 
			Strategy  & Distance (km) & Strategy & Distance (km) \\ [0.5ex] 
			\hline\hline
			$s_0$& 40.0 & $s_8$&  40.7  \\ 
			\hline
			$s_1$ & 40.7 & $s_9$ & 40.5 \\
			\hline
			$s_2$ &  41.4 & $s_{10}$ & 42.5  \\ 
			\hline
			$s_3$ & 42.0 & $s_{11}$ & 45.1 \\ 
			\hline
			$s_4$ &  42.6 & $s_{12}$ & 43.2 \\ 
			\hline
			$s_5$ &  43.1 & $s_{13}$ & 45.6 \\
			\hline
			$s_6$ &  46.0 & $s_{14}$ & 43.1 \\ 
			\hline
			$s_7$ & 52.9 & $s_{15}$ & 43.7\\ 
			\hline

		\end{tabular}
		
		\label{table:popp}
	\end{center}
\end{table}

%
%

%
\noindent We also checked if there would be any differences in performance if the runner took in the same amount of calories, but distributed differently. In $s_5$, where the runner took in 500 calories by taking five 100 calorie gels the distance achieved was 43.1 km. In $s_{12}$ where the runner takes in two 250 calorie gels the distance achieved was 43.2 km. Lastly, in $s_{14}$ where the runner consumes ten 50 calorie gels, the runner traveled 43.1 km. So, regardless of the way in which the runner consumes the same amount of calories, they travel relatively the same distance. Perhaps this would be different if there were a mechanism in the model that discouraged eating too much at once or eating later in the race (we attempt to remedy this in an extension of our model). Next, we compare strategy $s_5$, where the runner consumes a total of 500 calories, and strategy $s_8$, where the runner consumes a total of 800 calories.\\
\\
A runner taking five 100 calorie gels throughout the race is around the average number of gels used by your typical marathon runner. Some of the professionals are taking up to 100 carbohydrates an hour, which translates to 400 calories an hour. A runner finishing the marathon in about 2 hours would take in 800 calories, which is simulated with strategy $s_8$, in Figure \ref{kipfig}.  In the simulation the runner is able to run $1.5$km further, an improvement of $3.5\%$. In both simulations we see similar trajectories; however, the runner in simulation $s_8$ is able to run at a higher average speed throughout the race, yielding the larger distance ran in the same amount of time.  These results were expected as increasing the energy the body is able to use, should result in greater distance traveled.  This shows the importance in runners being able to take in as many calories during a race as possible.  It is possible to increase the amount of calories one can take in during a race.  One can do this by taking nutrition that is easier to digest, taking water with the nutrition so that the salinity balance of the gut is kept, and practicing taking in nutrition during long training runs.\\  
\\
Finally we take a look at the strategies, $s_{6}$, $s_{7}$ where the runner takes eleven and twenty-four 100 calorie gels throughout the race respectively. It is important to mention that these scenarios are unlikely to occur as a runner would struggle to consume this many gels without getting sick.  We completed simulations with nutrition strategies $s_{6}$ and $s_{7}$ to see if the model would organically capture the effect of a runner taking in too much nutrition. As can be seen in the results from Table \ref{table:popp}, this effect was not captured and the simulations simply show an increase in energy as more gels are added, and no negative side effects.  In the simulation where the runners consumes 11 gels, they are able to run 46.0 km, while in 24 gel simulation the runner travels 52.9 kms. Simply put, with a fixed time race, we see an increase in distance traveled for every gel that is taken. Taking in eleven and certainly twenty-four 100 calorie gels throughout a race would most certainly upset the runners stomach and all of those carbohydrates would struggle to make it to the muscles due to imbalances in the stomach.  The body does not handle the amount of accumulated energy in the nutrition compartment without a negative reaction. In future work we will handle this issue.\\

\noindent Next, we chose to vary the VLa types. We use the same parameters from Table \ref{table:kipchoge}, and vary the glyc function. 
%
\begin{table}[]
	\begin{center}
	\caption{Total Distance Achieved by Runners with Different VLa Types across Two Nutrient Strategies}
		\begin{tabular}{||c c c||}
			\hline 
			VLa type& Nutrition &Distance (km) \\ [0.5ex] 
			\hline\hline
			good&0 calories taken& 41.7\\
			\hline
			average & 0 calories taken& 40.0 \\
			\hline
			bad &0 calories taken & 37.8\\ 
			\hline
			good& Four $100$ calorie gels & 44.1\\
			\hline
			average & Four $100$ calorie gels& 42.6 \\
			\hline
			bad & Four $100$ calorie gels& 40.5\\ 
			\hline
		\end{tabular}
		
		\label{table:VLaR}
	\end{center}
\end{table}
Table \ref{table:VLaR} shows  the distances achieved using different VLa types across two different nutrition strategies. The VLa types labeled as ``good", ``average", and ``bad" correspond to the glyc functions shown in Figure \ref{3glyc}.
For these runs, we tested the distance for the case where the runner takes in no nutrition during the race, as well as the runner taking in four $100$ calorie gels.
In the three different simulations with varying VLa's, the runner consumes four 100 calorie gels during the race.  In these results, the importance of runners training their body's to have lower maximum rate production of lactate for a long distance runner is evident.  The runner with the good VLa type ( i.e. the runner whose $glyc$ function was made to use fewer carbohydrates than fats than the runners with average or bad VLa's) is able to run much further in the same amount of time. There is an $8.9\%$-$10.3\%$ improvement in the runner with a good VLa as compared with a bad VLa depending on if the runners took in nutrition as well.  We see the same race structure regardless of the VLa type, with both simulations showing the runners ending with approximately $0$ glycogen energy, but similarly to the simulation where the runner takes more nutrition, the runner with the higher VLa is able to maintain a higher force.  The runner with a good VLa can maintain a force of $1.97\times10^4$ m/min$^2$ throughout the race, while the runner with a bad VLa was only able to maintain a level force of approximately $1.81\times10^4$ m/min$^2$. Some of these results can be seen in Table \ref{table:paramdifflevelsR}\\
\\
Lastly, we consider varying the runner's VV02max. As this term applies to all levels of runners, we vary many of the parameters including: the length of race, $T$, the runners mass, $m$, the initial glycogen energy, $E_G(0)$, the runners velocity at $100\%$ VO2 max, VVO2max, and their VLa type.
We use many of the same parameters from Table \ref{table:kipchoge}, but change the weight of the runner, $m$, length of race, $T$, as well as the VVO2max of the runners. Note that $c_3$ and $M$ also vary, but do so due to their relationship with $m$ and $T$ respectively.  Runners of different levels will finish the marathon in different lengths of time, thus the need to vary $T$. Also, runners of different abilities can have significantly different VVO2max values and VLa types, and therefore change depending on the individual.  Table \ref{table:paramdifflevelsR} shows the distances achieved for our different levels of runners with 4 lengths of time. We completed these simulations for marathon runners finishing in different lengths of time using fmincon optimization.\\
\\ 
For runners who are running for 155 minutes and 180 minutes with VVO2max's of 320 and 250 respectively, we see realistic results.  The runners run at around $85\%$ of their VO2 max that is suggested by experts. We continue to see the impact of taking in-race nutrition compared without nutrition.  The runner who takes 180 minutes to finish the race and has an average VLa has a $4\%$ improvement when they take nutrition versus when they do not.  One issue with these results is with the ``slowest runner", since they are running at a pretty high percentage of their VO2 max.  It is unlikely that a runner would be able to do that for this length of race.  This is more likely due to the fact that our model does not penalize a runner for running too fast, as long as they have enough glycogen energy. Recall, that our only constraints are that the energies have to be above $0$, and that there are some upper bounds on forces and velocities.  We will amend our current model to work for shorter distance races to address this issue of runners only being constrained by total amount of glycogen energy.  

\begin{table}[]
	\centering
	\caption{Total Distance Achieved with Different Levels of Runners and Four Lengths of Time }
		\begin{tabular}{||p{0.9cm} p{1.5cm} p{1cm} p{0.9cm} p{2.6cm} p{1.5cm} p{0.9cm}||}
			\hline 
			Race Time (min)& VVO2 max (m/min)& weight (Kg)& VLa type& Nutrition (Cals) & $E_G(0)$ (KJ/Kg)& Dist. (Km) \\ [0.5ex] 
			\hline\hline
			$155$&320& 73&avg&4 100cal gels&150&42.9\\
			\hline
			$155$ &320& 73&avg& 0 100cal gels&150&40.1 \\
			\hline
			$155$ &  320&73&good&4 100cal gels&150&44.4\\ 
			\hline
			$155$ & 320&73&bad & 4 100cal gels&150&41.6\\ 
			\hline
			$180$ &  250&80&avg&0 100cal gels&140&41.0\\ 
			\hline
			$180$ &  250&80&avg& 4 100cal gels& 140&42.7\\
			\hline
			$215$ &  200&80&bad&0 100cal gels&140&41.6\\ 
			\hline
			$210$& 200&80&avg&4 100cal gels&144 & 44.3\\
			\hline

		\end{tabular}
		
		\label{table:paramdifflevelsR}
\end{table}
\section{Conclusions and Future Work} 

To understand the impact of nutrition we formulated our model such that in-race nutrition could be accounted for in the energy differential equations.  We created a marathon model described by velocity, fat energy, glycogen energy, and in-race nutrition differential equations. This marathon model better represented the body's energy systems and included nutrition pulses throughout the race.  After formulating an optimal control problem with this system of differential equations, and with the goal to maximize the distance achieved over a fixed time interval, we proved the existence of an optimal control. We determined a discretized approximate solution to the associated optimal control problem using MATLAB's fmincon optimization software, that included a bounded variation penalty on our control force.  We then optimized over a finite set of in-race nutrition strategies, obtaining a solution to our model for each strategy. \\
\\
We analyzed the complicated role that in-race nutrition input has on a marathon runner as well as energy allocation dependent on the percentage of one's VVO2max during running.  Expressing the dynamics of the body's energy throughout a running race as 2 differential equations, representing fat energy and glycogen energy, with usage determined by an allocation function dependent on the ratio of one's velocity compared to their VVO2max, is a valuable contribution to the field of runner models.  Prior to our model, all energy dynamics during running races had been described based on available oxygen per unit mass does not allow for one to include in-race nutrition energy.  As a runner has a much smaller storage supply of glycogen than fat, and the body prefers to use the glycogen at high levels of exertion, the in-race nutrition consists of essentially only carbohydrates.  These factors required us to split up the energy into the two compartments and to use the allocation function.   While we obtained an optimal solution for an expert runner, we also individualized our model such that it can be used to determine a race strategy for any type of runner as long as they know their weight, VVO2max and VLamax type.\\
\\
To obtain our approximate solution to the optimal control problem of maximizing distance for a fixed time, we atte discretized our problem, and then optimized, using Matlab's optimizing tool, fmincon.  
We included a penalization for variation in the objective function.  The variation penalization was extremely important and the solution structure was very sensitive to this penalization.  With too small of a variation penalty coefficient, the runners purposive force was often erratic and non-optimal. We found that the optimal trajectory for marathon runners was to run at a steady state velocity for the majority of the race and take in at least four 100 calorie supplements spread evenly throughout the event.   Over a variety of runner's individual parameters, this racing strategy was optimal.  A runner can improve their performance with consistent and targeted training to improve their VV02max and their VLamax, (which correlates to having a better $glyc$ function) which can in turn drastically improve the their performance potential. When we approximated a solution to our problem for a runner with the parameters of the current world record holder in the marathon, our approximation was quite good with an error of $0.4\%$.\\   
\\
 Our contributions to the field are: formulating a system that includes energy allocation dependent on the ratio of their velocity and their velocity at VO2max, adding in-race nutrition to a runner model, and  approximating a solution to the optimal control problem, amended to include a penalty bounding the variation of the control. As we have found that taking more carbohydrates throughout the race improves performance, it is natural to believe that one should take in as many carbohydrates as possible; however, this is unrealistic as the stomach's digestion slows through a running race and also struggles to tolerate high volume of carbohydrates in the form of glucose and fructose.  Our initial marathon model does not account for this fact and our results showed that a runner's performance increased linearly with each gel they consumed.  To account for this limitation requires additional work, which will be presented in our next paper. \\
\\
Upon executing different scenarios for our marathon model, we realized that our model was not appropriate for races between a sprint and a half marathon.  In these scenarios we obtained results that were much faster than has been accomplished by humans thus far.  These results came from only having energy constraints in model, when in reality there is a difference in energy in the body and energy that is available for use in the body, dependent on lactate production.  Our extended model with the effects of lactate will be presented in our followup paper. In the future we would like to incorporate hydration into our model. Hydration levels can be a huge factor for a runner.  Low hydration can negatively affect the rate at which the body uptakes the nutrition. One possibility could be changing our nutrition uptake function to represent low, medium, or high levels of hydration. Building this feature in would be the next step in advising runners on optimal intake during running races.

\section{Bibliography}

%

\begin{thebibliography}{}
%

\bibitem[1]{Aft} Aftalion, A. and Bonnans, J. F. (2014). Optimization of running strategies based on anaerobic energy and variations of velocity. SIAM Journal on Applied Mathematics, 74(5):1615-1636. 
 

\bibitem[2]{Atkins}  Atkins, S., Aghaee, M., Martcheva, M., and Hager, W. (2020). Solving singular control problems in mathematical biology, using PASA, arXiv:2010.06744, 2020.

\bibitem[3]{Behncke}  Behncke, H. (1993). A mathematical model for the force and energetics in competitive running. Journal of Mathematical Biology, 31(8):853-878. 

\bibitem[4]{Billat}  Billat, L. V. and Koralsztein, J. P. (1996). Significance of the velocity at vo2max and time to exhaustion at this velocity. Sports Medicine, 22(2):90-108. 
  
\bibitem[5]{Burke}  Burke, L. M., Hawley, J. A., Wong, S. H. S., and Jeukendrup, A. E. (2011). Carbohydrates for training and competition. Journal of Sports Sciences, 29(sup1):S17-S27. PMID: 21660838. 30

\bibitem[6]{Cap} Caponigro, M., Ghezzi, R., Piccoli, B., and Trelat, E. (2018). Regularization of chattering phenomena via bounded variation controls. IEEE Transactions on Automatic Control, 63(7):2046-2060. 


\bibitem[7]{LenhartD}  Ding, W. and Lenhart, S. (2009). Optimal harvesting of a spatially explicit fishery model: Optimal harvesting. Natural Resource Modeling, 22(2):173-211. 


\bibitem[8]{Fried}  Friedman, A. (1982). Foundations of Modern Analysis. Dover Books on Mathematics Series. Dover. 

\bibitem[9]{Hill}  Furusawa, K., Hill, A. V., and Parkinson, J. L. (1927). The dynamics of ``sprint" running. Proceedings of the Royal Society of London. Series B, Containing Papers of a Biological Character, 102(713):29-42. 


\bibitem[10]{hackbusch}  Hackbusch, W. (1978). A numerical method for solving parabolic equations with opposite orientations. Computing, 20(3):229-40. 

\bibitem[11]{HagerZ}  Hager, W. W. and Zhang, H. (2016). An active set algorithm for nonlinear optimization with polyhedral constraints. Science China. Mathematics, 59(8):1525-1542. 


\bibitem[12]{sub2} Hutchinson, A. (2020). We now have the lab data on Nike's breaking 2 runners. https://www.outsideonline.com/health/training-performance/nike-breaking2-runners-lab-data/


\bibitem[13]{jentjens} Jentjens, R. L. P. G. and Jeukendrup, A. E. (2005). High rates of exogenous carbohydrate oxidation from a mixture of glucose and fructose ingested during prolonged cycling exercise. British Journal of Nutrition, 93(4):485-492.
205

\bibitem[14]{Wallis}  Jeukendrup, A. E. and Wallis, G. A. (2005). Measurement of substrate oxidation during exercise by means of gas exchange measurements. International Journal of Sports Medicine, 26(S 1):S28-S37. 

%

\bibitem[15]{Keller1}  Keller, J. B. (1973). A theory of competitive running. Physics Today, 26(9):43-47.

\bibitem[16]{Keller2}  Keller, J. B. (1974). Optimal velocity in a race. The American Mathematical Monthly, 81(5):474-. 

\bibitem[17]{Kout} Koutlianos, N., Dimitros, E., Metaxas, T., Cansiz, M., Deligiannis, A., and Kouidi, E. (2013). Indirect estimation of vo2max in athletes by acsm's equation: valid or not? Hippokratia, 17(2):136-140.


\bibitem[18]{Lenhart} Lenhart, S. and Workman, J.T. (2007) Optimal Control Applied to Biological Models. Mathematical and Computational Biology Series, Chapman and Hall/CRC, London.


\bibitem[19]{Matlab}  MATLAB Optimization Toolbox (2020). Matlab optimization toolbox. The MathWorks, Natick, MA, USA. 

%
%
%
%


\bibitem[20]{Murray} Murray, B. and Rosenbloom, C. (2018). Fundamentals of glycogen metabolism for coaches and athletes. Nutrition Reviews, 76(4):243-259. 


\bibitem[21]{Rao} Patterson, M. A. and Rao, A. V. (2014). Gpops-ii: A MATLAB software for solving multiple-phase optimal control problems using Hp-adaptive gaussian quadrature collocation methods and sparse nonlinear programming. ACM Transactions on Mathematical Software, 41(1):1-37. 

\bibitem[22]{Pitch}  Pitcher, A. B. (2009). Optimal strategies for a two-runner model of middle-distance running. SIAM Journal on Applied Mathematics, 70(4):1032-1046.

\bibitem[23]{Pontr} Pontryagin , L. S.,  Boltyanskiii, V. G.,  Gamkrelize, R. V., and Mishchenko, E. F. (1962)  The Mathematical Theory of Optimal Processes, Wiley.


\bibitem[24]{GLC} Robbins, H. M. (1967). A generalized Legendre-Clebsch condition for the singular cases of optimal control. IBM Journal of Research and Development, 11(4):361-372. 

\bibitem[25]{Rob} Roberts, J. W. (2004). Predicting marathon performance using body mass variables and training data: A field prediction model. ProQuest Dissertations Publishing.

\bibitem[26]{RunUSA} www.runningtheusa.com (2021).


\bibitem[27]{Scott}  Scott, C. (2005). Misconceptions about aerobic and anaerobic energy expenditure. Journal of the International Society of Sports Nutrition, 2(2):32-37. 

\bibitem[28]{Anj}  Shete, A. N., Bute, S. S., and Deshmukh, P. R. (2014). A study of vo2 max and body fat percentage in female athletes. Journal of Clinical and Diagnostic Research, 8(12):BC01-BC03. 

\bibitem[29]{Daniels} Stipanuk, M. H. and Caudill, M. A. (2005). Daniels' Running Formula. 

\bibitem[30]{biochem} Stipanuk, M. H. and Caudill, M. A. (2013). Biochemical, physiological, and molecular aspects of human nutrition. Elsevier/Saunders, St. Louis, Mo, 3rd ed. edition. 

%


\bibitem[31]{Wood}  Woodside, W. (1991). The optimal strategy for running a race (a mathematical model for world records from 50 m to 275 km). Mathematical and Computer Modeling, 15(10):1- 12. 
    
    
    
    
    
    
    
    
    
    
         
\end{thebibliography}



%
%


\section*{Acknowledgements} 
We would like to acknowledge the help from Greg Hillson, Bobby Holcombe, and Summer Atkins.


\end{document}